\documentclass[preprint,12pt]{elsarticle}
\usepackage{amsmath,amssymb,amsthm,mathrsfs,dsfont}
\usepackage[margin=3cm]{geometry}
\usepackage{titlesec,hyperref}
\usepackage{color}
\usepackage{fancyhdr}
\pagestyle{fancy}
\lhead{}
\linespread{1.6}
\titleformat{\subsection}{\it}{\thesubsection.\enspace}{1pt}{}
\makeatletter
\def\ps@pprintTitle{%
   \let\@oddhead\@empty
   \let\@evenhead\@empty
   \let\@oddfoot\@empty
   \let\@evenfoot\@oddfoot
}
\makeatother

\textwidth=168  true mm
\textheight=240 true mm
\topmargin=-25 true mm
\oddsidemargin= 3 true mm
\evensidemargin=0.4 cm
\newtheorem{theo}{Theorem}[section]
\newtheorem{lemm}[theo]{Lemma}
\newtheorem{defi}[theo]{Definition}

\newtheorem{rema}[theo]{Remark}
\numberwithin{equation}{section}
\def\bel{\begin{equation}\label}

\def\eeq{\end{equation}}

\newcommand\R{{\mathbb R}\,}

\newcommand{\na}{{\nabla}}

\newcommand\tr{{\rm tr}\,}
\newcommand{\beq}{\begin{equation}}
\newcommand{\beno}{\begin{equation*}}
\newcommand{\eeno}{\end{equation*}}

\let\d=\delta

\let\lam=\lambda

\let\s=\sigma
\let\f=\frac
\let\vf=\varphi
\let\p=\psi

\let\G= \Gamma
\let\Lam=\Lambda

\let\Om=\Omega

\let\tri=\triangle
\let\ep=\epsilon
\def\na{\nabla}
\def\dive{\mathop{\rm div}\nolimits}
\def\exp{\mathop{\rm exp}\nolimits}

\def\tr{\mathop{\rm tr}\nolimits}
\def\D{\dot\Delta}

\allowdisplaybreaks

\begin{document}
\begin{frontmatter}
\title{Global well-posedness for the Phan-Thein-Tanner model in critical Besov spaces without damping}
\author[add1]{Yuhui Chen}\ead{chenyh339@mail.sysu.edu.cn}
\author[add2] {Wei Luo\corref{cor1}}\ead{luowei23@mail2.sysu.edu.cn}\cortext[cor1]{Corresponding author}
\author[add3]{Xiaoping Zhai}\ead{zhaixp@szu.edu.cn.}
\address[add1]{School of  Aeronautics and Astronautic, Sun Yat-sen University, Guangzhou, 510275, CHINA}
\address[add2]{School of Mathematics, Sun Yat-sen University, Guangzhou, 510275, CHINA}
\address[add3]{School  of Mathematics and Statistics, Shenzhen University,
 Shenzhen 518060, CHINA.}

\begin{abstract}
In this paper, we mainly investigate the Cauchy problem for the Phan-Thein-Tanner (PTT) model. The PPT model can be viewed as a Navier-Stokes equations couple with a nonlinear transport system. This model is derived from network theory for the polymeric fluid. We study about the global well posedness of the PTT model in critical Besov spaces. When the initial data is a small perturbation over around the equilibrium, we prove that the strong solution in critical Besov spaces exists globally.

\noindent {\it 2010 AMS Classification}: 35A01, 35B45, 35Q35, 76A05, 76D03.
\end{abstract}

\begin{keyword}
The Phan-Thein-Tanner Model; Critical Besov Spaces; Global Existence.
\end{keyword}

\end{frontmatter}
\vspace*{10pt}

\section{Introduction}
In this paper, we consider the initial value problem for the following incompressible Phan-Thein-Tanner (PTT) model\cite{Thien1977,Thien1978}:
\begin{align}\label{OB}
\left\{
\begin{array}{ll}
u_{t}+u\cdot \na u-\mu \tri u+\na p=\mu_1\dive \tau, \quad (t, x) \in \mathbb{R}^{+}\times \mathbb{R}^3, \\[1ex]
\tau_t+u\cdot \na\tau+ (a+b\tr\tau)\tau+Q(\tau,\na u)=\mu_2 D(u),  \quad (t, x) \in \mathbb{R}^{+}\times \mathbb{R}^3, \\[1ex]
\dive u=0,  \quad (t, x) \in \mathbb{R}^{+}\times \mathbb{R}^3, \\[1ex]
u|_{t=0}=u_{0}(x),  \quad  \tau|_{t=0}=\tau_0(x), \quad x \in  \mathbb{R}^3. \\[1ex]
\end{array}
\right.
\end{align}
Here $u$ stands for the velocity and $p$ is the scalar pressure of fluid, $\tau$ is the stress tensor. $D(u)$ is the symmetric part of $\na u$, that is
\[D(u)=\f12(\na u+(\na u)^T).\]
$Q(\tau,\na u)$ is a given bilinear form
\[Q(\tau,\na u)=\tau \Om(u)-\Om(u)\tau+\lam(D(u)\tau+\tau D(u)),\]
where $\Om(u)$ is the skew-symmetric part of $\na u$, namely
\[\Om(u)=\f12(\na u-(\na u)^T).\]
$\mu>0$ is the viscosity coefficient and $\mu_1$ is the elastic coefficient. $a$ and $\mu_2$ are associated to the Debroah number $De=\frac{\mu_2}{a}$, which indicates the relation between the characteristic flow time and elastic time\cite{Bird1977}. $\lam\in[-1,1]$ is a physical parameter. In particular, we call the system co-rotational case when $\lam=0$. $b\geq 0$ is a constant relate to the rate of creation or destruction for the polymeric network junctions.

If $b=0$, the system \eqref{PTT} reduce to the famous Oldroyd-B model (See \cite{Oldroyd}) which has been studied widely. Let us review some mathematical results for the related Oldroyd type model. C. Guillop\'e and J.C. Saut \cite{Guillope1990-NA,Guillope1990} proved the existence of local strong solutions and the global existence of one dimensional shear flows. In \cite{Fernandez-Cara}, E. Fern\'andez-Cara, F. Guill\'en and R. Ortega studied the local well-posedness in Sobolev spaces. J. Chemin and N. Masmoudi \cite{Chemin2001} proved the local well-posedness in critical Besov spaces and give a low bound for the lifespan. In the co-rotational case, P. L. Lions and N. Masmoudi \cite{Lions-Masmoudi} proved the global existence of weak solutions. In \cite{Lin-Liu-Zhang2005}, F. Lin, C. Liu and P. Zhang proved that if the initial data is a small perturbation around equilibrium, then the strong solution is global in time. The similar results were obtained in several papers by virtue of different methods, see Z. Lei and Y. Zhou \cite{Lei-Zhou2005}, Z. Lei, C. Liu and Y. Zhou \cite{Lei2008}, T. Zhang and D. Fang \cite{Zhang-Fang2012}, Y. Zhu \cite{Zhu2018}. D. Fang, M. Hieber and R. Zi proved the global existence of strong solutions with a class of large data \cite{Fang-Zi2013,Fang-Zi2016}. Recently, Q. Chen and X. Hao \cite{Chen-Hao} and X. Zhai \cite{Zhai} study about the global well-posedness in the critical Besov spaces respectively. For the Oldroyd-B model, the global existence of strong solutions in two dimension without small conditions is still an open problem.

In this paper, we suppose that $b=\mu=\mu_1=\mu_2=1$ and $a=\lam=0$ in the PTT model. To our knowledge, there are a lot of numerical results about the PTT model (See, \cite{Oliveira1999,Mu-Zhao2012,Mu-Zhao2013,Bautista,Tamaddon-Jahromi}). However, there is no any well-posedness results about the PTT model. The nonlinear term $(\tr\tau) \tau$ in the PTT model will leads to some interesting phenomenon that is quiet different between the Oldroyd-B model. By virtue of the characteristic method, we prove that the strong solution of \eqref{PTT} will blow up in finite time when the initial data $\tr \tau_0<0$. This is a new phenomenon can not be founded in other viscoelastic model.

On the other hand, when $\tr \tau_0$ has a positive low bound $c_0$, we can prove the global existence of strong solution with small initial data. The idea is inspired by the method applied in \cite{Chen-Miao,Zhai}. The main different is to deal with the nonlinear term $(\tr\tau) \tau$. In \cite{Chen-Miao,Zhai}, the authors study about the following mixed linear system
\begin{align}\label{Lin}
\left\{
\begin{array}{ll}
u_{t}-\tri u-\Lam(\Lam^{-1}\mathbb{P}\dive \tau)=\mathbb P E, \\[1ex]
(\Lam^{-1}\mathbb P\dive\tau)_t+\Lam u=\Lam^{-1}\mathbb P\dive F, \\[1ex]
\end{array}
\right.
\end{align}
where $\mathbb {P}$ is the Leray projection operator and $\Lam^s=(-\tri)^{\f s2}$. Based on the above dissipative structure of $u$ and $\Lam^{-1}\mathbb{P} \dive \tau$, the authors in \cite{Chen-Hao,Zhai} prove the global existence of the strong solution for the Oldroyd-B model with small initial data in the critical Besov spaces. However, from the linearized system, we can not obtain any dissipation for $\tau$. Thus we can't control the nonlinear term $(\tr\tau) \tau$ in large time even tough the initial data is small. In order to deal with this difficult term, we need to change the original system into a new form.  Note that $u=0$ and \beno
\bar{\tau}=\f13\f1{\f1{c_0}+t}I=\begin{pmatrix} \f13\f1{\f1{c_0}+t} & 0 & 0 \\
 0 & \f13\f1{\f1{c_0}+t} & 0\\
 0 & 0& \f13\f1{\f1{c_0}+t}\\
 \end{pmatrix},
\eeno
is a special solution of \eqref{OB}. Let $\s=\tau-\bar{\tau}$, we rewrite \eqref{OB} in the perturbation form as:
\begin{align}\label{PTT}\tag{PTT}
\left\{
\begin{array}{ll}
u_{t}+u\cdot \na u- \tri u+\na p=\dive \s, \quad \dive u=0,\\[1ex]
\s_t+u\cdot \na\s+\f1{\f1{c_0}+t}(\s+\f13(\tr\s)I)+ (\tr\s)\s+Q(\s,\na u)= D(u),   \\[1ex]
u|_{t=0}=u_{0}(x),   \quad \s|_{t=0}=\tau_0(x)-\frac{c_0}{3}I. \\[1ex]
\end{array}
\right.
\end{align}

If $c_0>0$, we see that the linear term $\f1{\f1{c_0}+t} \s$ will leads to some dissipation information for $\s$.
Specifically, we will define the following basic energy in the low and high frequencies:
\beno
\mathcal E_1(t)=\|u^{\ell}\|_{\widetilde L^\infty_t(\dot B^{\f 12}_{2,1})}+\|\s^{\ell}\|_{\widetilde L^\infty_t(\dot B^{\f 12}_{2,1})},
\eeno
\beno
\mathcal E_2(t)=\|u^{\ell}\|_{ L^1_t(\dot B^{\f 52}_{2,1})}+\|\Lam^{-1}\mathbb P\dive\s ^{\ell}\|_{ L^1_t(\dot B^{\f 52}_{2,1})},
\eeno
\beno
\mathcal E_3(t)=\|u^h\|_{\widetilde L^\infty_t(\dot B^{\f 3p-1}_{p,1})}+\|\s^h\|_{\widetilde L^\infty_t(\dot B^{\f 3p}_{p,1})}+\|u^h\|_{ L^1_t(\dot B^{\f 3p+1}_{p,1})}+\|\Lam^{-1}\mathbb P\dive\s ^h\|_{ L^1_t(\dot B^{\f 3p}_{p,1})},
\eeno
\beno
\mathcal E_4(t)=\|\tr\s^{\ell}\|_{L_t^1(\dot B^{\f 32}_{2,1})}+\|\tr\s^h\|_{L_t^1(\dot B^{\f 3p}_{p,1})},
\eeno
and then estimate the above terms one by one. By virtue of the Littlewood-Paley theory, we deduce that
\begin{align*}
E(t)=&\mathcal E_1(t)+\mathcal E_2(t)+\mathcal E_3(t)+\mathcal E_4(t)\\
\leq &C^*\Big[1+\exp\big(\mathcal E_2(t)+\mathcal E_3(t)+\mathcal E_4(t)\big)\Big]\mathcal E(0)+\Big[\mathcal E_1^2(t)+\mathcal E_2^2(t)+\mathcal E_3^2(t)+\mathcal E_4^2(t)\Big].
\end{align*}

From the above estimate we can obtain the global existence by a standard continuous argument under the condition $u_0$ and $\s_0$ is small enough.

A functional space is called critical if the associated norm is invariant under the scaling transformation. Although the system \eqref{OB}
 does not have any scaling invariance, one may find that $(u,\tau)$ is a solution of \eqref{Lin}, then
 \[(u_\lambda, \tau_\lambda, p_\lambda)=(\lambda u(\lambda^2t,\lambda x), \tau(\lambda^2t,\lambda x),\lambda^2p(\lambda^2t,\lambda x))\]
is also a solution of \eqref{Lin}. Thus, we can use the linearized system to define the critical spaces. The reason to consider the well-posedness in critical spaces has been fully explained in \cite{Chemin2001}. There is a lot of papers study about the well-posedness in critical Besov spaces, one can refers to \cite{Chen-Miao,Chen-Zhang,Chen-Miao-Zhang,Danchin2000,Danchin2001,Danchin-He} and references therein.

{\bf Notation. Since all function spaces in through out the paper are over $\mathbb{R}^3$, for simplicity, we drop $\mathbb{R}^3$ in the notation of function spaces if there is no ambiguity.  $A\lesssim B$ stands for $A\leq C B$ for some constant $C>0$ independent of $A$ and $B$. $\mathcal S'(\mathbb R^3)$ denotes the set of tempered distributions.  For any $z\in \mathcal S'(\mathbb R^3)$, the lower and higher
oscillation parts can be expressed as
\[z^\ell=\sum_{j\leq N} \D_j z, \quad \quad z^h= \sum_{j> N} \D_j z,\]
where $\D_j$ are the Littlewood-Paley dyadic blocks and $N$ is a large but fixed integer.
}

Our main result can be stated as follow:

\begin{theo}\label{them}
 Let $0<c_0<+\infty$. Suppose that $\dive u_0=0$, $(\s_0)_{ij}=(\s_0)_{ji}$, and the initial data $(u_0^{\ell}, \s_0^{\ell})\in \dot B^{\f 12}_{2,1}$, $u_0^h\in \dot B^{\f 3p-1}_{p,1}$, $\s_0^h\in \dot B^{\f 3p}_{p,1}$ with $p\in[2,4]$.  There exists a $\ep_0$ such that if
 \[0<\delta_0:=\|(u_0^{\ell},\s_0^{\ell})\|_{\dot B^{\f 12}_{2,1}}+\|u_0^h\|_{\dot B^{\f 3p-1}_{p,1}}+\|\s_0^h\|_{\dot B^{\f 3p}_{p,1}}\leq \ep_0,\]
 then the problem \eqref{PTT} admits a unique global solution $(u(t), \s(t))$ satisfying that for all $t\geq0$: \beno
\begin{split}
&\|u^{\ell}\|_{\widetilde L^\infty_t(\dot B^{\f 12}_{2,1})}+\|\s^{\ell}\|_{\widetilde L^\infty_t(\dot B^{\f 12}_{2,1})}
+\|u^h\|_{\widetilde L^\infty_t(\dot B^{\f 3p-1}_{p,1})}+\|\s^h\|_{\widetilde L^\infty_t(\dot B^{\f 3p}_{p,1})}\\
&\quad+\|u^{\ell}\|_{ L^1_t(\dot B^{\f 52}_{2,1})}+\|\Lam^{-1}\mathbb P\dive\s ^{\ell}\|_{ L^1_t(\dot B^{\f 52}_{2,1})}
+\|u^h\|_{ L^1_t(\dot B^{\f 3p+1}_{p,1})}+\|\Lam^{-1}\mathbb P\dive\s ^h\|_{ L^1_t(\dot B^{\f 3p}_{p,1})}
\leq C(c_0)\d_0,
\end{split}
\eeno
where $C>0$ is a positive constant independent of $t$.
\end{theo}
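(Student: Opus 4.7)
The plan is to work directly with the perturbed system \eqref{PTT} and close a bootstrap estimate on the composite energy $E(t)=\mathcal E_1(t)+\mathcal E_2(t)+\mathcal E_3(t)+\mathcal E_4(t)$ introduced above. The underlying linear structure is the mixed system \eqref{Lin}, in which the pair $(u,\Lam^{-1}\mathbb P\dive\s)$ satisfies a damped wave-type coupling; this produces heat-like dissipation for $u$ and first-order damping for $\Lam^{-1}\mathbb P\dive\s$, but \emph{not} for the full stress $\s$. The crucial novelty is that after subtracting the explicit decaying solution $\bar\tau$, the linear term $\frac{1}{1/c_0+t}(\s+\frac13(\tr\s)I)$ furnishes a weak time-dependent damping, and this turns out to be just strong enough to produce an additional $L^1_t$ bound on $\tr\s$, which is the whole purpose of introducing $\mathcal E_4$.

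First I would localize \eqref{PTT} via the Littlewood--Paley blocks $\D_j$ and estimate each block in the low and high regimes separately, at the threshold $N$. For low frequencies ($j\le N$) I would work in $L^2$-based Besov spaces, use the spectral lower bound $2^{2j}\sim|\xi|^2$ to recover full parabolic smoothing for $u$, and apply a Lyapunov-type functional adapted to the coupling $(u,\Lam^{-1}\mathbb P\dive\s)$ in order to extract the $L^1_t(\dot B^{5/2}_{2,1})$ damping of $\Lam^{-1}\mathbb P\dive\s$. For high frequencies ($j>N$) the relevant damping is of order one; I would pass to $L^p$-based norms with $p\in[2,4]$ in order to exploit the maximal $L^1$ regularity of the heat semigroup in $\dot B^{3/p+1}_{p,1}$ and of the damped-transport operator for $\Lam^{-1}\mathbb P\dive\s$. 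The estimate for $\mathcal E_4$ is then obtained by taking the trace of the $\s$-equation: using $\dive u=0$, the identity $\tr Q(\s,\na u)=0$ (which holds because $\lam=0$) and $\tr I=3$, one finds
\begin{equation*}
\pa_t \tr\s+u\cdot\na\tr\s+\frac{2}{\frac1{c_0}+t}\,\tr\s+(\tr\s)^2=0,
\end{equation*}
so that the time weight $(1/c_0+t)^{-2}$ provides an integrable-in-time gain for $\tr\s$.

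The main nonlinear inputs to control are the transport term $u\cdot\na\s$, the bilinear form $Q(\s,\na u)$, and the genuinely new quadratic term $(\tr\s)\s$. For the first two I would apply Bony's paraproduct decomposition together with Danchin-type commutator estimates to reduce everything to standard product rules in critical Besov spaces, following the schemes of \cite{Chen-Hao,Zhai}. The term $(\tr\s)\s$ is the hard one: since $\s$ itself enjoys no parabolic smoothing, it cannot be absorbed into a diffusive estimate, and the only route is to bound it via a product inequality of the type $\|(\tr\s)\s\|_{\dot B^{s}_{p,1}}\lesssim\|\tr\s\|_{\dot B^{3/p}_{p,1}}\|\s\|_{\dot B^{s}_{p,1}}$ and then \emph{time-integrate} using the $L^1_t$ control on $\tr\s$ supplied by $\mathcal E_4$. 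This is precisely where the extra damping induced by $\bar\tau$ is indispensable; without the positivity $\tr\tau_0\ge c_0>0$ the quadratic $(\tr\s)\s$ could drive a finite-time blow-up, which is consistent with the authors' blow-up result when $\tr\tau_0<0$.

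Assembling all of these estimates produces precisely the nonlinear inequality displayed in the introduction, and a standard continuity argument then closes the bootstrap provided $\d_0\le\ep_0$ is small enough, so that the factor $\exp(\mathcal E_2(t)+\mathcal E_3(t)+\mathcal E_4(t))$ stays bounded. The main obstacle throughout will be the absence of parabolic smoothing for $\s$: the whole scheme stands or falls on whether the weak time-damping from $\bar\tau$, together with the $\mathcal E_4$-bound on $\tr\s$, can absorb the quadratic coupling $(\tr\s)\s$ uniformly in time. Uniqueness and propagation of the claimed regularity would then follow by a standard energy estimate on the difference of two solutions in a slightly weaker Besov space combined with a time-continuity argument.
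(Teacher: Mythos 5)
Your proposal follows essentially the same route as the paper's proof: perturbation around the special solution $\bar\tau$, the four-energy scheme with the low/high frequency splitting, a Lyapunov-type functional for the coupling $(u,\Lam^{-1}\mathbb P\dive\s)$ at low frequencies, order-one damping in $L^p$-based norms at high frequencies (the paper implements this via the effective unknown $\G=u-\Lam^{-1}\mathbb P\dive\s$, which your sketch leaves implicit), the $(\f1{c_0}+t)^2$-weighted trace equation to obtain the $L^1_t$ control of $\tr\s$, and a standard continuity argument. The key structural observations you single out (vanishing of $\tr Q$ for $\lam=0$, $\tr D(u)=0$, and absorbing $(\tr\s)\s$ through the $\mathcal E_4$ bound) are exactly those used in the paper, so the approach matches.
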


The remainder of the paper is organized as follows. In Section 2 we review some basic proposition about the homogeneous Besov space. In Section 3 devote to study about the global existence of the strong solution with small initial data.

\section{Preliminary}
In this section, we first give the definition of the homogeneous Besov space. (See \cite{Bahouri2011} for more details)
Let $\mathcal{C}$ be the annulus $\{\xi\in\mathbb{R}^{3}\big|\frac{3}{4}\leq|\xi|\leq\frac{8}{3}\}.$ There exists radial function $\varphi$, valued in the interval $[0,1]$, such that
\begin{align}
\forall\xi\in\mathbb{R}^{3}\backslash\{0\},~\sum_{j\in\mathbb{Z}}\varphi(2^{-j}\xi)=1,
\end{align}
\begin{align}
|j-j'|\geq2\Rightarrow Supp ~\varphi(2^{-j}\xi)\cap Supp ~\varphi(2^{-j'}\xi)=\emptyset.
\end{align}
 The homogeneous dyadic blocks $\dot{\Delta}_{j}$ are defined by
\begin{align} \dot{\Delta}_{j}u=\varphi(2^{-j}D)u=\int_{\mathbb{R}^{3}}h(2^{j}y)u(x-y)dy,
\end{align}
\begin{align}
\dot{S}_{j}u=\chi(2^{-j}D)u=\int_{\mathbb{R}^{3}}\widetilde{h}(2^{j}y)u(x-y)dy,
\end{align}
where
$$h(x)=\mathcal{F}^{-1}(\varphi)(x),\quad \widetilde{h}(x)=\mathcal{F}^{-1}(\chi)(x), \quad \chi(\xi)=1-\sum_{j\geq 0}\varphi(2^{-j}\xi).$$
We denote by $S'_{h}(\R^3)$ the space of tempered distributions $u$ such that\beno
\lim_{\lam\to\infty}\|\theta(\lam D)u\|_{L^\infty}=0,\quad \forall \theta\in\mathcal D(\R^3).
\eeno
The homogeneous Besov space is denoted by $\dot{B}^{s}_{p,r}$, that is
$$\dot{B}^{s}_{p,r}=\Big\{u\in S'_{h}\Big{|}\|u\|_{\dot{B}^{s}_{p,r}}=\big\|2^{js}\|\dot{\Delta}_{j}u\|_{L^{p}_x}\big\|_{l^r}<\infty\Big\},$$
where $s\in\mathbb{R}$ and $p, r\in[1,+\infty]$. One can easily check that
\[\|u^\ell\|_{\dot{B}^{s}_{p,1}}\approx\sum_{j\leq N}2^{js}\|\D_j u\|_{L^p}, \quad \text{and} \quad \|u^h\|_{\dot{B}^{s}_{p,1}}\approx\sum_{j> N}2^{js}\|\D_j u\|_{L^p}.\]

In order to study the product acts on Besov spaces,  we need to use the Bony decomposition.
\begin{defi}
\cite{Bahouri2011} For functions $u$ and $v$, Bony's decomposition in the homogeneous context is defined by
$$uv=\dot{T}_{u}v+\dot{R}(u,v)+\dot{T}_{v}u,$$
where
$$\dot{T}_{u}v\triangleq\sum_{j}\dot{S}_{j-1}u\dot{\Delta}_{j}v,\quad \dot{R}(u,v)\triangleq\sum_{|k-j|\leq1}\dot{\Delta}_{k}u\dot{\Delta}_{j}v.$$
\end{defi}

The following two lemmas will be used in the sequel.
\begin{lemm}\label{com}
Let $p\in [2,\infty)$, for any $v^{\ell}\in{\dot B^{\f 12}_{2,1}}$, $v^h\in{\dot B^{\f 3p-1}_{p,1}}$, $w^{\ell}\in{\dot B^{\f 32}_{2,1}}$, $w^h\in{\dot B^{\f 3p}_{p,1}}$ and $\na u^{\ell}\in{\dot B^{\f 32}_{2,1}}$, $\na u^h\in{\dot B^{\f 3p}_{p,1}}$, then we have
\beno
\begin{split}
\sum_{j\leq N}2^{\f12j}\|[u\cdot\na,\D_j]v\|_{L^2}
&\lesssim\|v^{\ell}\|_{\dot B^{\f 12}_{2,1}}\|\na u\|_{\dot B^{\f 3p}_{p,1}}+\|v\|_{\dot B^{\f 3p-1}_{p,1}}\|\na u^{\ell}\|_{\dot B^{\f 32}_{2,1}},\\
\sum_{j> N}2^{(\f 3p-1)j}\|[u\cdot\na,\D_j]v\|_{L^p}
&\lesssim\|v^h\|_{\dot B^{\f 3p-1}_{p,1}}\|\na u\|_{\dot B^{\f 3p}_{p,1}}+\|v\|_{\dot B^{\f 3p-1}_{p,1}}\|\na u^h\|_{\dot B^{\f 3p}_{p,1}},\\
\sum_{j\leq N}2^{\f 32j}\|[u\cdot\na,\D_j]w\|_{L^2}
&\lesssim\|w^{\ell}\|_{\dot B^{\f 32}_{2,1}}\|\na u\|_{\dot B^{\f 3p}_{p,1}}+\|w\|_{\dot B^{\f 3p}_{p,1}}\|\na u^{\ell}\|_{\dot B^{\f 32}_{2,1}},\\
\sum_{j> N}2^{\f 3pj}\|[u\cdot\na,\D_j]w\|_{L^p}
&\lesssim\|w^h\|_{\dot B^{\f 3p}_{p,1}}\|\na u\|_{\dot B^{\f 3p}_{p,1}}+\|w\|_{\dot B^{\f 3p}_{p,1}}\|\na u^h\|_{\dot B^{\f 3p}_{p,1}}.
\end{split}
\eeno
\end{lemm}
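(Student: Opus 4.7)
The plan is to prove all four commutator bounds by the same route: Bony's paraproduct decomposition of $u\cdot\nabla v$, a standard first-order commutator estimate for the paraproduct piece, direct paraproduct/remainder continuity for the remaining two pieces, and finally a low/high-frequency split when assembling the dyadic sum.

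First I would write, for each $j$,
\[
[u\cdot\nabla,\dot\Delta_j]v \;=\; [\dot T_{u^k},\dot\Delta_j]\partial_k v \;+\; \dot T'_{\partial_k\dot\Delta_j v}u^k \;-\; \dot\Delta_j\dot T'_{\partial_k v}u^k,
\]
with $\dot T'_a b:=\dot T_a b+\dot R(a,b)$. By spectral localization the first summand reduces to $\sum_{|k-j|\leq 4}[\dot S_{k-1}u^l,\dot\Delta_j]\partial_l\dot\Delta_k v$, and the standard first-order commutator lemma (Lemma 2.97 of \cite{Bahouri2011}) gives
\[
\|[\dot S_{k-1}u^l,\dot\Delta_j]\partial_l\dot\Delta_k v\|_{L^p}\;\lesssim\;\|\nabla u\|_{L^\infty}\|\dot\Delta_k v\|_{L^p}.
\]
The two paraproduct-type terms $\dot T'_{\partial_k\dot\Delta_j v}u^k$ and $\dot\Delta_j\dot T'_{\partial_k v}u^k$ are spectrally supported at frequency $\lesssim 2^j$, and one Bernstein inequality on the $\dot\Delta_{k'}\partial_k v$ factor transfers the gradient onto $u$, after which the usual paraproduct and remainder continuity in Besov norms applies.

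Next, I would assemble these pointwise-in-$j$ bounds into a master inequality of the form $2^{js}\|[u\cdot\nabla,\dot\Delta_j]v\|_{L^p}\leq c_j\bigl(\|v\|_X\|\nabla u\|_Y+\|v\|_{X'}\|\nabla u\|_{Y'}\bigr)$ with $(c_j)\in\ell^1$, and then split $u=u^\ell+u^h$ (and $v=v^\ell+v^h$ for the first and second lines, $w=w^\ell+w^h$ for the third and fourth). The embedding $\|\nabla u\|_{L^\infty}\lesssim\|\nabla u^\ell\|_{\dot B^{3/2}_{2,1}}+\|\nabla u^h\|_{\dot B^{3/p}_{p,1}}$ (valid for $2\leq p<\infty$) controls the $L^\infty$ norm appearing in the commutator lemma. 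Summing with weight $2^{j/2}$ or $2^{3j/2}$ over $j\leq N$ and taking $L^2$ norms yields the first and third inequalities; summing with $2^{(3/p-1)j}$ or $2^{3j/p}$ over $j>N$ and taking $L^p$ norms yields the second and fourth. In each case the two terms on the right-hand side arise exactly from the dichotomy "the outer function absorbs the low/high cutoff, $\nabla u$ is full" versus "$\nabla u$ absorbs the cutoff, the outer function is full."

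The main obstacle is the bookkeeping of the low/high cross terms: one has to verify, for instance, that the interaction $\nabla u^h\cdot v^\ell$ inside a sum over $j\leq N$ (with $L^2$ outer norms) can still be absorbed into one of the stated products $\|v^\ell\|_{\dot B^{1/2}_{2,1}}\|\nabla u\|_{\dot B^{3/p}_{p,1}}$ or $\|v\|_{\dot B^{3/p-1}_{p,1}}\|\nabla u^\ell\|_{\dot B^{3/2}_{2,1}}$. This uses Bernstein to pass between $L^2$ and $L^p$ norms of dyadic blocks, together with the fact that the cutoff $N$ is fixed so that the geometric series $\sum_{j\leq N}2^{j(s-s')}$ and $\sum_{j>N}2^{j(s-s')}$ converge for the relevant signs of $s-s'$. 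Once this is checked, the $\ell^1$ summability of the $c_j$-sequences in each of the four weighted ranges is routine and produces exactly the claimed bounds.
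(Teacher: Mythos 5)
Your proposal is correct and is essentially the paper's own argument in textbook form: the paper likewise applies Bony's decomposition to $[u\cdot\nabla,\dot\Delta_j]$ (its $I_1$ is your paraproduct commutator, its $I_2+I_3$ are your two remaining terms), uses the first-order commutator estimate plus H\"older/Bernstein together with the embeddings $\dot B^{3/p}_{p,1}\hookrightarrow L^\infty$ and $\|\Lambda^{-1}v\|_{L^\infty}\lesssim\|v\|_{\dot B^{3/p-1}_{p,1}}$, and the two products on each right-hand side arise from exactly the low/high dichotomy you describe. One small inaccuracy: $\dot T'_{\partial_k\dot\Delta_j v}u^k$ is not spectrally supported in $\{|\xi|\lesssim 2^j\}$ (its $u$-factor carries arbitrarily high frequencies), but your estimation scheme never actually relies on that claim, so the argument is unaffected.
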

\begin{proof}
We only deal with the first inequality. Using Bony's decomposition for $[u\cdot\na,\D_j]v$, then we have\beno
[u\cdot\na,\D_j]v
=\sum_{|i-j|\leq4}[\dot S_{i-1}u\cdot\na,\D_j]\D_iv+\sum_{i\geq j-3}[\D_iu\cdot\na,\D_j]\dot S_{i-1}v+\sum_{|i-j|\leq 3}[\D_iu\cdot\na,\D_j]\tilde \D_iv=I_1+I_2+I_3,
\eeno
where $\tilde \D_i=\D_{i-1}+\D_i+\D_{i+1}.$
By the H\"{o}lder inequality, we get\beno
\begin{split}
\|I_1\|_{L^2}+\|I_3\|_{L^2}
\lesssim &\sum_{|i-j|\leq4}2^{-j}(\|\na \dot S_{i-1}u\|_{L^\infty}\|\na\D_i v\|_{L^2}+\|\na\D_i u\|_{L^\infty}\|\na \tilde \D_iv\|_{L^2})\\
\lesssim& \sum_{|i-j|\leq4}2^{i-j}\|\na u\|_{\dot B^{\f 3p}_{p,1}}\|\D_j v\|_{L^2}
\lesssim \|\na u\|_{\dot B^{\f 3p}_{p,1}}\|\D_j v\|_{L^2},
\end{split}
\eeno
which give rise to\beno
\begin{split}
\sum_{j\leq N}2^{\f12j}(\|I_1\|_{L^2}+\|I_3\|_{L^2})
\lesssim\|\na u\|_{\dot B^{\f 3p}_{p,1}}\|v^{\ell}\|_{\dot B^{\f12}_{2,1}}.
\end{split}
\eeno
As for $I_2$, we split it into two terms\beno
I_2=\sum_{|i-j|\leq4}[\D_iu\cdot\na,\D_j]\dot S_{i-1}v+\sum_{i\geq j+3} \D_iu\cdot\na(\D_j\dot S_{i-1}v)
=I_{21}+I_{22},
\eeno
by the H\"{o}lder inequality, we get\beno
\begin{split}
\|I_2\|_{L^2}
\lesssim& \sum_{|i-j|\leq4}2^{-j}(\|\na \D_i u\|_{L^2}\|\na \dot S_{i-1}v\|_{L^\infty}+\sum_{i\geq j+3} \|\D_iu\cdot\na(\D_j\dot S_{i-1}v)\|_{L^2}\\
\lesssim& \sum_{\substack{|i-j|\leq4}}2^{2i-j}\|\na\D_i u\|_{L^2}\|\Lam^{-1} v\|_{L^\infty}+\sum_{i\geq j+3} 2^{j-i}\|\na u\|_{L^\infty}\|\D_j\dot S_{i-1}v\|_{L^2}\\
\lesssim& 2^{j}\|\na\D_j u\|_{L^2}\|\Lam^{-1} v\|_{L^\infty}+\|\na u\|_{L^\infty}\|\D_j v\|_{L^2},\\
\end{split}
\eeno
which gives rise to\beno
\begin{split}
\sum_{j\leq N}2^{\f12j}\|I_{2}\|_{L^2}
\lesssim &\sum_{\substack{j\leq N}}2^{\f 32j}\|\na \D_j u\|_{L^2}\|\Lam^{-1} v\|_{L^\infty}+\sum_{j\leq N}2^{\f12j}\|\na u\|_{L^\infty}\|\D_j v\|_{L^2}\\
\lesssim&\|\na u^{\ell}\|_{\dot B^{\f 32}_{2,1}}\|v\|_{\dot B^{\f 3p-1}_{p,1}}+\|\na u\|_{\dot B^{\f 3p}_{p,1}}\|v^{\ell}\|_{\dot B^{\f 12}_{2,1}}.
\end{split}
\eeno
Together with the above estimates, then we prove the Lemma.
\end{proof}

\begin{lemm}\label{mult}
{\sl Let $p\in [2,4]$, for any $v^{\ell}\in{\dot B^{\f 12}_{2,1}}$, $v^h\in{\dot B^{\f 3p-1}_{p,1}}$, $w^{\ell}\in{\dot B^{\f 32}_{2,1}}$, $w^h\in{\dot B^{\f 3p}_{p,1}}$,  $u^{\ell}\in{\dot B^{\f 32}_{2,1}}$, $u^h\in{\dot B^{\f 3p}_{p,1}}$ then we have
\beno
\begin{split}
\|(vu)^{\ell}\|_{\dot B^{\f 12}_{2,1}}
\lesssim&(\|v^{\ell}\|_{\dot B^{\f 12}_{2,1}}+\|v^h\|_{\dot B^{\f 3p-1}_{p,1}})\|u\|_{\dot B^{\f 3p}_{p,1}},\\
\|(vu)^h\|_{\dot B^{\f 3p-1}_{p,1}}
\lesssim&(\|v^{\ell}\|_{\dot B^{\f 12}_{2,1}}+\|v^h\|_{\dot B^{\f 3p-1}_{p,1}})\|u\|_{\dot B^{\f 3p}_{p,1}},\\
\|(wu)^{\ell}\|_{\dot B^{\f 32}_{2,1}}
\lesssim&(\|w^{\ell}\|_{\dot B^{\f 32}_{2,1}}+\|w^h\|_{\dot B^{\f 3p}_{p,1}})(\|u^{\ell}\|_{\dot B^{\f 32}_{2,1}}+\|u^h\|_{\dot B^{\f 3p}_{p,1}}),\\
\|(wu)^h\|_{\dot B^{\f 3p}_{p,1}}
\lesssim&\|w\|_{\dot B^{\f 3p}_{p,1}}\|u\|_{\dot B^{\f 3p}_{p,1}}.
\end{split}
\eeno
}
\end{lemm}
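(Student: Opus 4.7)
My plan is to apply Bony's paraproduct decomposition
\beno
vu=\dot T_v u+\dot T_u v+\dot R(v,u)
\eeno
to each product, combined with the splitting of each factor into low- ($j\leq N$) and high-frequency ($j>N$) parts. The parameter range $p\in[2,4]$ is precisely calibrated so that the relevant Bernstein inequalities work: $p\geq 2$ gives the low-frequency embedding $\|z^\ell\|_{\dot B^{\f 3p-1}_{p,1}}\lesssim \|z^\ell\|_{\dot B^{\f 12}_{2,1}}$, and its analog with index $\f 3p$ versus $\f 32$, while $p\leq 4$ ensures that the H\"older-conjugate index $p':=2p/(p-2)$ satisfies $p\leq p'$, which is exactly what allows Bernstein to move a low-frequency factor from $L^p$ to $L^{p'}$.

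For estimates (2) and (4) I would reduce everything to the uniform $L^p$-based product inequality $\|vu\|_{\dot B^{\f 3p-1}_{p,1}}\lesssim\|v\|_{\dot B^{\f 3p-1}_{p,1}}\|u\|_{\dot B^{\f 3p}_{p,1}}$ and the algebra property of $\dot B^{\f 3p}_{p,1}$ (valid since $\f 3p>0$). Both are proved by Bony together with the embedding $\dot B^{\f 3p}_{p,1}\hookrightarrow L^\infty$: the paraproducts use $L^\infty\cdot L^p\hookrightarrow L^p$, and the remainder uses H\"older plus a Bernstein cutoff on a single factor. The hypothesis $v^\ell\in\dot B^{\f 12}_{2,1}$ is then absorbed into the uniform estimate via the low-frequency Bernstein embedding mentioned above.

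For estimates (1) and (3) the output lies in an $L^2$-based space, so I would bound $\|\dot\Delta_j(vu)\|_{L^2}$ for $j\leq N$ by H\"older in the dual pair $(L^p,L^{p'})$ with $\f 1{p'}+\f 1p=\f 12$, then convert one factor to $L^{p'}$ by Bernstein. For (1), applied to $\dot T_v u$, one moves $\dot S_{k-1}v$ to $L^{p'}$ at cost $2^{3k(2/p-1/2)}$; the condition $p\leq p'$, i.e.\ $p\leq 4$, is what makes this Bernstein legal. The paraproduct $\dot T_u v$ and the remainder are handled by an analogous H\"older--Bernstein pairing. For (3) I split $w=w^\ell+w^h$, $u=u^\ell+u^h$ and bound the four cross-terms separately: the low-low piece uses the algebra property of $\dot B^{\f 32}_{2,1}$, the mixed terms use Bernstein on the low-frequency factor to harmonise indices, and the hardest piece is $\dot R(w^h,u^h)$ at a low-frequency output, where H\"older gives $L^{p/2}$ and Bernstein converts to $L^2$, again requiring $p/2\leq 2$.

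The main obstacle will be the $\dot R(w^h,u^h)$ contribution: at $p=4$ the Bernstein factor $2^{3k(2/p-1/2)}$ degenerates to unity, leaving no margin in the sum over $k\geq j-3$, so the output weights $2^{\f 32 j}$ must be absorbed entirely by the intrinsic $\ell^1$ summability of the Besov norms of $w^h$ and $u^h$. This critical bookkeeping, together with the correct matching of H\"older-Bernstein pairs across all mixed cross-terms, is the delicate part; the remainder of the proof follows the same template as the proof given for Lemma \ref{com}.
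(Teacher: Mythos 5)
Much of your plan is sound and coincides with the paper's Bony-based argument: the reduction of (2) and (4) to the uniform product law in $\dot B^{\f 3p-1}_{p,1}$ and the algebra property of $\dot B^{\f 3p}_{p,1}$, the treatment of $\dot T_v u$ in (1) by the $(L^{p'},L^p)$ H\"older--Bernstein pairing (exponent $\f 3p-\f12>0$, so the low-frequency sum converges), the remainder terms, and the role of $p\in[2,4]$. The genuine gap is in your claim that ``$\dot T_u v$ and the remainder are handled by an analogous H\"older--Bernstein pairing'' (and, in (3), that the mixed terms are fixed by ``Bernstein on the low-frequency factor''). The literally analogous move puts $\dot S_{k-1}u$ in $L^{p'}$, $p'=2p/(p-2)$; but blockwise this costs $\|\D_m u\|_{L^{p'}}\lesssim 2^{m(\f 3p-\f32)}c_m\|u\|_{\dot B^{\f 3p}_{p,1}}$ (and the same exponent appears if you use $u^\ell\in\dot B^{\f 32}_{2,1}$ or $w^\ell\in\dot B^{\f 32}_{2,1}$), and since $\f 3p-\f32\leq 0$ the sum over $m\le k-2$ diverges for every $p>2$: low-frequency cuts of these functions are simply not in $L^{p'}$. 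The alternative pairing $\|\dot S_{k-1}u\|_{L^\infty}\|\D_k v\|_{L^2}$ is unavailable for the blocks $N<k\lesssim N+5$ of $v$ (resp.\ of $u^h$, $w^h$ in (3)), which are known only in $L^p$ yet do contribute to the low-frequency output $\D_j$, $j\le N$, and Bernstein cannot lower the Lebesgue exponent on $\mathbb R^3$. This is exactly where the paper inserts its key device: it writes $\dot S_{N+1}\dot T_u v=\dot T_u(\dot S_{N+1}v)+[\dot S_{N+1},\dot T_u]v$ (and, for (3), the same commutation with $\widetilde T_w$ and $\dot T_u$), so that the paraproduct only ever involves low-frequency blocks of $v$, measured in $L^2$, while the commutator gains a derivative on $u$; then $\|\na\dot S_{k-1}u\|_{L^q}\lesssim 2^{k(1-\f 3q)}\|u\|_{\dot B^{\f 3p}_{p,1}}$ with $\f1q=\f12-\f1p$ is summable over low frequencies because $1-\f 3q>0$. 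Without this trick (or an equivalent near-cutoff analysis using that only finitely many blocks just above $2^N$ interact with the output $j\le N$), your estimates (1) and (3) do not close.

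A secondary point: you locate the main difficulty in $\dot R(w^h,u^h)$ at $p=4$, but that term is harmless. After Bernstein from $L^{p/2}$ to $L^2$ on the output block (cost $2^{3j(\f 2p-\f12)}$, bounded for $j\le N$), the weighted double sum carries the geometric factor $2^{(j-k)\f 6p}$ with $\f 6p\ge \f32>0$, so it converges with a margin uniformly in $p\in[2,4]$; no delicate bookkeeping is needed there. The delicate terms are the paraproducts discussed above, which is precisely what the paper's commutator decomposition is designed for.
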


\begin{proof}
Using the Bony decomposition, then we get\beno
\dot S_{N+1}(vu)=\dot S_{N+1}(\dot T_v u+\dot R(v, u))+\dot T_{ u}\dot S_{N+1}v+[\dot S_{N+1},\dot T_u] v.
\eeno
 Let $\f1q=\f12-\f1p$, $q\geq p$, one can check that $\f 3q-1<0$. By the definition of $\dot T_v u$ and $\dot R(v, u)$, we have\beno
\begin{split}
\|\dot S_{N+1}(\dot T_v u+\dot R(v, u))\|_{\dot B^{\f 12}_{2,1}}
\lesssim &\sum_{\substack{k\geq j-2}}2^{\f12j}\|\D_{j}(\dot S_{k+2}v\D_k u)\|_{L^2}\\
\lesssim &\sum_{\substack{k\geq j-2}}2^{(\f 3p+\f 3q-1)j}\|\dot S_{k+2}v\|_{L^q}\|\D_k u\|_{L^p}\\
\lesssim &\|v\|_{\dot B^{\f 3q-1}_{q,1}}\|u\|_{\dot B^{\f 3p}_{p,1}}
\lesssim \|v\|_{\dot B^{\f 3p-1}_{p,1}}\|u\|_{\dot B^{\f 3p}_{p,1}},
\end{split}
\eeno
and\beno
\|\dot T_{u}\dot S_{N+1}v\|_{\dot B^{\f 12}_{2,1}}
\lesssim\|\dot S_{N+1}v\|_{\dot B^{\f12}_{2,1}}\|u\|_{L^\infty}
\lesssim \|v^{\ell}\|_{\dot B^{\f 12}_{2,1}}\|u\|_{\dot B^{\f 3p}_{p,1}},
\eeno
for the last term, we have\beno
\|[\dot S_{N+1},\dot T_u] v\|_{\dot B^{\f 12}_{2,1}}
\lesssim\|v\|_{\dot B^{\f 3p-1}_{p,1}}\|\na u\|_{\dot B^{\f 3q-1}_{q,1}}
\lesssim\|v\|_{\dot B^{\f 3p-1}_{p,1}}\|u\|_{\dot B^{\f 3p}_{p,1}}.
\eeno
Taking advantage of the Bony decomposition again, we obtain\beno
(I-\dot S_{N+1})(vu)=(I-\dot S_{N+1})(\dot T_v u+\dot R(v, u))+(I-\dot S_{N+1})\dot T_{ u}v.
\eeno
Notice that $\f 3q-1<0$. Let $\f1p=\f1q+\f1r$, then we have\beno
\begin{split}
&\|(I-\dot S_{N+1})(\dot T_v u+\dot R(v, u))\|_{\dot B^{\f 3p-1}_{p,1}}\\
\lesssim &\sum_{\substack{k\geq j-2}}2^{(\f 3p-1)j}\|\D_{j}(\dot S_{k+2}v\D_k u)\|_{L^p}+\sum_{\substack{k\geq j-2}}2^{(\f 3p-1)j}\|\D_{j}(\D_kv \dot S_{k-1}u)\|_{L^p}\\
\lesssim &\sum_{\substack{k\geq j-2}}2^{(\f 3q+\f 3r-1)j}\|\dot S_{k+2}  v\|_{L^q}\| \D_k u\|_{L^r}
+\sum_{\substack{k\geq j-2}}2^{(\f 3p-1)j}\|\D_kv \|_{L^p}\|\dot S_{k-1} u\|_{L^\infty}\\
\lesssim &\|v\|_{\dot B^{\f 3q-1}_{q,1}}\|u\|_{\dot B^{\f 3r}_{r,1}}+\|v^h\|_{\dot B^{\f 3p-1}_{p,1}}\|u\|_{L^\infty}\\
\lesssim &(\|v^{\ell}\|_{\dot B^{\f 12}_{2,1}}+\|v^h\|_{\dot B^{\f 3p-1}_{p,1}})\|u\|_{\dot B^{\f 3p}_{p,1}},
\end{split}
\eeno
and \beno
\begin{split}
\|(I-\dot S_{N+1})\dot T_u v\|_{\dot B^{\f 3p-1}_{p,1}}
\lesssim\|v\|_{\dot B^{\f 3p-1}_{p,1}}\|u\|_{L^\infty}
\lesssim\|v\|_{\dot B^{\f 3p-1}_{p,1}}\|u\|_{\dot B^{\f 3p}_{p,1}}.
\end{split}
\eeno
Let $\widetilde{T}_w u =\dot T_w u+\dot R(w, u)$. By a similar computation, we have \beno
\begin{split}
\|\|\dot S_{N+1}(wu)\|_{\dot B^{\f 32}_{2,1}}
\leq&\|\dot S_{N+1}(\widetilde{T}_u w)\|_{\dot B^{\f 32}_{2,1}}
+\|\dot S_{N+1}(\dot{T}_w u)\|_{\dot B^{\f 32}_{2,1}}\\
\lesssim&\|[\dot S_{N+1},\widetilde{T}_w] u\|_{\dot B^{\f 32}_{2,1}}+\|\widetilde{T}_w( \dot S_{N+1}u)\|_{\dot B^{\f 32}_{2,1}}
+\|[\dot S_{N+1},\dot{T}_u] w\|_{\dot B^{\f 32}_{2,1}}+\|\dot{T}_u( \dot S_{N+1}w)\|_{\dot B^{\f 32}_{2,1}}\\
\lesssim&\|w\|_{\dot B^{\f 3p}_{p,1}}\|\na u\|_{\dot B^{\f 3q-1}_{q,1}}
+\|w\|_{L^\infty}\|u^{\ell}\|_{\dot B^{\f 32}_{2,1}}+
\|u\|_{\dot B^{\f 3p}_{p,1}}\|\na w\|_{\dot B^{\f 3q-1}_{q,1}}
+\|u\|_{L^\infty}\|w^{\ell}\|_{\dot B^{\f 32}_{2,1}}\\
\lesssim&(\|w^{\ell}\|_{\dot B^{\f 32}_{2,1}}+\|w^h\|_{\dot B^{\f 3p}_{p,1}})(\|u^{\ell}\|_{\dot B^{\f 32}_{2,1}}+\|u^h\|_{\dot B^{\f 3p}_{p,1}}),
\end{split}
\eeno
and
\beno
\begin{split}
\|(I-\dot S_{N+1})(wu)\|_{\dot B^{\f 3p}_{p,1}}
\lesssim\|w\|_{\dot B^{\f 3p}_{p,1}}\|u\|_{L^\infty}+\|u\|_{\dot B^{\f 3p}_{p,1}}\|w\|_{L^\infty}
\lesssim\|w\|_{\dot B^{\f 3p}_{p,1}}\|u\|_{\dot B^{\f 3p}_{p,1}}.
\end{split}
\eeno
Hence we prove the Lemma.
\end{proof}

\section{Global existence}
In this section, we are going to prove our main result. There is no derivative in the additional term in \eqref{PTT}, the proof of local well-posedness for \eqref{PTT} is similar to the Oldroyd-B model(See \cite{Fernandez-Cara,Lin-Liu-Zhang2005, Zhai}) and we omit the detail here. In order to prove the global existence of strong solutions, we give the some basic energies as follows,
\beq
\mathcal E(0)=\|u^{\ell}_0\|_{\dot B^{\f 12}_{2,1}}+\|\s^{\ell}_0\|_{\dot B^{\f 12}_{2,1}}
+\|u^h_0\|_{\dot B^{\f 3p-1}_{p,1}}+\|\s^h_0\|_{\dot B^{\f 3p}_{p,1}},
\eeq
\beq
\mathcal E_1(t)=\|u^{\ell}\|_{\widetilde L^\infty_t(\dot B^{\f 12}_{2,1})}+\|\s^{\ell}\|_{\widetilde L^\infty_t(\dot B^{\f 12}_{2,1})},
\eeq
\beq
\mathcal E_2(t)=\|u^{\ell}\|_{ L^1_t(\dot B^{\f 52}_{2,1})}+\|\Lam^{-1}\mathbb P\dive\s ^{\ell}\|_{ L^1_t(\dot B^{\f 52}_{2,1})},
\eeq
\beq
\mathcal E_3(t)=\|u^h\|_{\widetilde L^\infty_t(\dot B^{\f 3p-1}_{p,1})}+\|\s^h\|_{\widetilde L^\infty_t(\dot B^{\f 3p}_{p,1})}+\|u^h\|_{ L^1_t(\dot B^{\f 3p+1}_{p,1})}+\|\Lam^{-1}\mathbb P\dive\s ^h\|_{ L^1_t(\dot B^{\f 3p}_{p,1})},
\eeq
\beq
\mathcal E_4(t)=\|\tr\s^{\ell}\|_{L_t^1(\dot B^{\f 32}_{2,1})}+\|\tr\s^h\|_{L_t^1(\dot B^{\f 3p}_{p,1})},
\eeq
where $\mathbb P=\mathbb I-\tri^{-1}\na\dive$ is the Leray projection operator.
We shall derive the a priori estimates of $\mathcal E_1(t)$, $\mathcal E_2(t)$, $\mathcal E_3(t)$ and $\mathcal E_4(t)$ respectively.

\subsection{The estimates of $\mathcal E_1(t)$}
Applying the operator $\D_j\mathbb P$ to the first equation of \eqref{PTT} and $\D_j$ to the second equation of \eqref{PTT}, we have
\begin{align}\label{ob1}
\left\{
\begin{array}{ll}
(\D_j u)_{t}+u\cdot \na \D_ju-\tri \D_ju=\D_j\mathbb P\dive \s+[u\cdot\na,\D_j\mathbb P]u, \\[1ex]
(\D_j\s)_t+u\cdot \na\D_j \s+\f1{\f1{c_0}+t}(\D_j\s+\f13(\tr\D_j\s)I)\\[1ex]
\qquad=\D_j D(u)-\D_j  ((\tr\s)\s+Q(\s,\na u))+[u\cdot\na,\D_j]\s. \\[1ex]
\end{array}
\right.
\end{align}
Notice that $\dive u=0$. Taking the $L^2$ scalar product of the first equation of \eqref{ob1} with $\D_j u$ and the second equation of \eqref{ob1} with $\D_j \s$, then we obtain that
\bel{11}
\begin{split}
&\f12\f{d}{dt}(\|\D_j u\|_{L^2}^2+\|\D_j \s\|_{L^2}^2)+\|\na\D_j u\|_{L^2}^2+\f23\f1{\f1{c_0}+t}\|\D_j\s
\|_{L^2}^2\\
\leq&\int_{\R^3} (\D_j\mathbb P\dive \s\cdot\D_j u+\D_j D(u)\cdot\D_j \s) dx\\
&\quad+\int_{\R^3} ([u\cdot\na,\D_j\mathbb P]u\cdot\D_j u+[u\cdot\na,\D_j]\s\cdot\D_j \s) dx\\
&\quad
-\int_{\R^3} \D_j  ((\tr\s)\s+Q(\s,\na u))\cdot\D_j \s dx.
\end{split}
\eeq
Since $\s_{ij}=\s_{ji}$, it follows that\beno
\int_{\R^3}(\D_j\mathbb P\dive \s\cdot\D_j u+\D_j D(u)\cdot\D_j \s) dx=0.
\eeno
Integrating in time and multiplying both sides of \eqref{11} by $2^{\f 12j}$, summing up about $j\leq N$, then we obtain that,
\beno
\begin{split}
&\|u^{\ell}\|_{\widetilde L^\infty_t(\dot B^{\f 12}_{2,1})}+\|\s^{\ell}\|_{\widetilde L^\infty_t(\dot B^{\f 12}_{2,1})}
\lesssim \|u^{\ell}_0\|_{\dot B^{\f 12}_{2,1}}+\|\s^{\ell}_0\|_{\dot B^{\f 12}_{2,1}}
+\sum_{j\leq N}2^{\f 12j}\|[u\cdot\na,\D_j\mathbb P]u\|_{L^1_t(L^2)}\\
&\quad+\sum_{j\leq N}2^{\f 12j}\|[u\cdot\na,\D_j]\s\|_{L^1_t(L^2)}
+\sum_{j\leq N}2^{\f 12j}\|\D_j  ((\tr\s)\s+Q(\s,\na u))\|_{L^1_t(L^2)}.
\end{split}
\eeno
Applying Lemma \ref{com} yields that\beno
\begin{split}
&\sum_{j\leq N}2^{\f12j}(\|[u\cdot\na,\D_j\mathbb P]u\|_{L^1_t(L^2)}+\|[u\cdot\na,\D_j]\s\|_{L^1_t(L^2)})\\
\lesssim &\int_0^t\big(\|u^{\ell}\|_{\dot B^{\f 12}_{2,1}}+\|u^h\|_{\dot B^{\f 3p-1}_{p,1}}+\|\s^{\ell}\|_{\dot B^{\f 12}_{2,1}}+\|\s^h\|_{\dot B^{\f 3p-1}_{p,1}}\big)\big(\|\na u^{\ell}\|_{\dot B^{\f 32}_{2,1}}+\|\na u^h\|_{\dot B^{\f 3p}_{p,1}}\big)ds.
\end{split}
\eeno
Taking advantage of Lemma \ref{mult}, we have
\beno
\begin{split}
&\sum_{j\leq N}2^{\f12j}\|\D_j  ((\tr\s)\s+Q(\s,\na u))\|_{L^1_t(L^2)}\\
\lesssim &\int_0^t\big(\|\s^{\ell}\|_{\dot B^{\f 12}_{2,1}}+\|\s^h\|_{\dot B^{\f 3p-1}_{p,1}}\big)\big(\|\tr\s^{\ell}\|_{\dot B^{\f 32}_{2,1}}+\|\tr\s^h\|_{\dot B^{\f 3p}_{p,1}}+\|\na u^{\ell}\|_{\dot B^{\f 32}_{2,1}}+\|\na u^h\|_{\dot B^{\f 3p}_{p,1}}\big)ds.
\end{split}
\eeno
According to above estimates, we deduce that\beq\label{e1}
\begin{split}
&\|u^{\ell}\|_{\widetilde L^\infty_t(\dot B^{\f 12}_{2,1})}+\|\s^{\ell}\|_{\widetilde L^\infty_t(\dot B^{\f 12}_{2,1})}\\
\lesssim &\mathcal E(0)+\int_0^t\big(\|u^{\ell}\|_{\dot B^{\f 12}_{2,1}}+\|u^h\|_{\dot B^{\f 3p-1}_{p,1}}
+\|\s^{\ell}\|_{\dot B^{\f 12}_{2,1}}+\|\s^h\|_{\dot B^{\f 3p}_{p,1}}\big)\\
&\quad\times\big(\|\tr\s^{\ell}\|_{\dot B^{\f 32}_{2,1}}+\|\tr\s^h\|_{\dot B^{\f 3p}_{p,1}}+\|\na u^{\ell}\|_{\dot B^{\f 32}_{2,1}}+\|\na u^h\|_{\dot B^{\f 3p}_{p,1}}\big)ds,
\end{split}
\eeq
which implies\beq
\mathcal E_1(t)
\lesssim \mathcal E(0)+\big(\mathcal E_1(t)+\mathcal E_3(t)\big)\big(\mathcal E_2(t)+\mathcal E_3(t)+\mathcal E_4(t)\big).
\eeq

\subsection{The estimates of $\mathcal E_2(t)$}
Applying the operator $\mathbb P$ to the first equation of \eqref{PTT} and  $\Lam^{-1}\mathbb P\dive$ to the second equation of \eqref{PTT}, we have
\begin{align}\label{ob2}
\left\{
\begin{array}{ll}
u_{t}+\mathbb P(u\cdot \na u)-\tri u=\mathbb P\dive \s, \\[1ex]
(\Lam^{-1}\mathbb P\dive\s)_t+\Lam^{-1}\mathbb P\dive(u\cdot \na\s)+\f1{\f1{c_0}+t}\Lam^{-1}\mathbb P\dive\s+\f12\Lam u\\[1ex]
\qquad=- \Lam^{-1}\mathbb P\dive((\tr\s)\s+Q(\tau,\na u)). \\[1ex]
\end{array}
\right.
\end{align}
Applying $\D_j$ to the system \eqref{ob2}, then we obtain the following system:
\begin{align}\label{ob3}
\left\{
\begin{array}{ll}
\D_ju_{t}+u\cdot \na \D_ju-\tri \D_ju-\Lam\D_j\p=f_j,\\[1ex]
\D_j \p_t+u\cdot \na \D_j\p+\f1{\f1{c_0}+t}\D_j \p+\f12\Lam \D_j u=g_j, \\[1ex]
\end{array}
\right.
\end{align}
where\beno
\begin{split}
\p&=\Lam^{-1}\mathbb P\dive\s, \quad f_j=[u\cdot\na, \D_j\mathbb P]u,\\
g_j&=[u\cdot\na, \D_j \Lam^{-1}\mathbb P\dive]\s-\D_j \Lam^{-1}\mathbb P\dive((\tr\s)\s+Q(\s,\na u)).
\end{split}
\eeno
Let $0<\eta<1$ be a small constant which will be determined later on. Taking inner product with $(1-\eta)\D_j u$ for the first equation of \eqref{ob3}, and $\D_j\p$ for the second equation of \eqref{ob3}, and then we have\beno
\begin{split}
&\f12\f{d}{dt}\big((1-\eta)\|\D_ju\|_{L^2}^2+\|\D_j\p\|_{L^2}^2\big)+(1-\eta)\|\na\D_ju\|_{L^2}^2
+\eta \int_{\mathbb R^3} \Lam \D_j u\cdot \D_j\p dx\\
\lesssim &\|f_j\|_{L^2}\|\D_ju\|_{L^2}+\|g_j\|_{L^2}\|\D_j\p\|_{L^2}.
\end{split}
\eeno
Denote $\vf=2\Lam\p-u$, we have the following equation:
\beno
\D_j\vf_{t}+u\cdot \na \D_j\vf+(1+\f2{\f1{c_0}+t})\Lam\D_j\p=2\Lam g_j-f_j+2[u\cdot \na,\Lam]\D_j\p,
\eeno
taking $L^2$ inner product of $\D_j \vf$,  then we have\beno
\begin{split}
&\f12\f{d}{dt}\|\D_j\vf\|_{L^2}^2+2(1+\f2{\f1{c_0}+t})\|\Lam\D_j\p\|_{L^2}^2-(1+\f2{\f1{c_0}+t})\int_{\mathbb R^3} \Lam\D_j\p \cdot \D_j  u dx\\
\lesssim &\big(\|f_j\|_{L^2}+2^j\|g_j\|_{L^2}+\|[u\cdot \na,\Lam]\D_j\p\|_{L^2}\big)\big(\|\Lam\D_j\p\|_{L^2}+\|\D_j u\|_{L^2}\big).
\end{split}
\eeno
Together with the above inequalities and using the fact that $1\leq (1+\f2{\f1{c_0}+t})\leq 1+2c_0$, we deduce that\beno
\begin{split}
&\f12\f{d}{dt}\big((1-\eta)\|\D_ju\|_{L^2}^2+\|\D_j\p\|_{L^2}^2+\eta\|\D_j\vf\|_{L^2}^2\big)+(1-\eta)\|\na\D_ju\|_{L^2}^2 +\eta\|\Lam\D_j\p\|_{L^2}^2\\
\lesssim&
\big(\|f_j\|_{L^2}+(1+2^j)\|g_j\|_{L^2}+\|[u\cdot \na,\Lam]\D_j\p\|_{L^2}\big)\big(\|\D_ju\|_{L^2}+(1+2^j)\|\D_j\p\|_{L^2}\big).
\end{split}
\eeno
For any $j\leq N$, we can find a $\eta=\eta(N)>0$ small enough such that
\[(1-\eta)\|\D_ju\|_{L^2}^2+\|\D_j\p\|_{L^2}^2+\eta\|\D_j\vf\|_{L^2}^2\geq C_N(\|\D_ju\|_{L^2}^2+ \|\D_j\p\|_{L^2}^2).\]  

From the above inequality and using Berntein's lemma, we verify that\beno
\begin{split}
&\f{d}{dt}\big(\|\D_ju\|_{L^2}+\|\D_j\p\|_{L^2}+\|\D_j\vf\|_{L^2}\big)+2^{2j}\|\D_ju\|_{L^2}+2^{2j} \|\D_j \p\|_{L^2}\\
\lesssim &(1+2^j)\big(\|f_j\|_{L^2}+(1+2^j)\|g_j\|_{L^2}+\|[u\cdot \na,\Lam]\D_j\p\|_{L^2}\big).
\end{split}
\eeno
Integrating in time and multiplying both sides of the above inequality by $2^{
\f12j}$, and summing up about $j\leq N$, we have
\beno
\begin{split}
&\|u^{\ell}\|_{\widetilde L^\infty_t(\dot B^{\f 12}_{2,1})}+\|\p^{\ell}\|_{\widetilde L^\infty_t(\dot B^{\f 12}_{2,1})}+\|\vf^{\ell}\|_{\widetilde L^\infty_t(\dot B^{\f 12}_{2,1})}+\|u^{\ell}\|_{ L^1_t(\dot B^{\f 52}_{2,1})}+\| \p^{\ell}\|_{ L^1_t(\dot B^{\f 52}_{2,1})}\\
\lesssim &\|u_0^{\ell}\|_{\dot B^{\f 12}_{2,1}}+\|\p^{\ell}_0\|_{\dot B^{\f 12}_{2,1}}+\|\vf^{\ell}_0\|_{\dot B^{\f 12}_{2,1}}+\int_0^t\sum_{j\leq N}2^{\f12j}\big(\|f_j\|_{L^2}+\|g_j\|_{L^2}+\|[u\cdot \na,\Lam]\D_j\p\|_{L^2}\big)ds.
\end{split}
\eeno
Thanks to Lemma \ref{com} and Lemma \ref{mult}, we verify that
\beno
\sum_{j\leq N}2^{\f12j}\|f_j\|_{L^2}
\lesssim \big(\|u^{\ell}\|_{\dot B^{\f 12}_{2,1}}+\|u^h\|_{\dot B^{\f 3p-1}_{p,1}}\big)\big(\|\na u^{\ell}\|_{\dot B^{\f 32}_{2,1}}+\|\na u^h\|_{\dot B^{\f 3p}_{p,1}}\big),
\eeno
\beno
\begin{split}
\sum_{j\leq N}2^{\f12j}\|g_j\|_{L^2}
\lesssim&  \big(\|\s^{\ell}\|_{\dot B^{\f 12}_{2,1}}+\|\s^h\|_{\dot B^{\f 3p-1}_{p,1}}\big)\big(\|\na u^{\ell}\|_{\dot B^{\f 32}_{2,1}}+\|\na u^h\|_{\dot B^{\f 3p}_{p,1}}+\|\tr\s^{\ell}\|_{\dot B^{\f 32}_{2,1}}+\|\tr\s^h\|_{\dot B^{\f 3p}_{p,1}}
\big),
\end{split}
\eeno
\beno
\begin{split}
\sum_{j\leq N}2^{\f12j}\|[u\cdot \na,\Lam]\D_j\p\|_{L^2}
\lesssim&\sum_{j\leq N}2^{\f12j}\|\na u\|_{L^\infty}\|\na\D_j\p\|_{L^2}\\
\lesssim&\|\p^{\ell}\|_{\dot B^{\f 32}_{2,1}}\|\na u\|_{\dot B^{\f 3p}_{p,1}}
\lesssim\|\s^{\ell}\|_{\dot B^{\f 12}_{2,1}}(\|\na u^{\ell}\|_{\dot B^{\f 32}_{2,1}}
+\|\na u^h\|_{\dot B^{\f 3p}_{p,1}}).
\end{split}
\eeno
Combining the above estimates yields that\beq\label{e2}
\begin{split}
&\|u^{\ell}\|_{\widetilde L^\infty_t(\dot B^{\f 12}_{2,1})}+\|\p^{\ell}\|_{\widetilde L^\infty_t(\dot B^{\f 12}_{2,1})}+\|u^{\ell}\|_{ L^1_t(\dot B^{\f 52}_{2,1})}+\| \p^{\ell}\|_{ L^1_t(\dot B^{\f 52}_{2,1})}\\
\lesssim &\mathcal E(0)+\int_0^t\big(\|u^{\ell}\|_{\dot B^{\f 12}_{2,1}}+\|u^h\|_{\dot B^{\f 3p-1}_{p,1}}+\|\s^{\ell}\|_{\dot B^{\f 12}_{2,1}}+\|\s^h\|_{\dot B^{\f 3p}_{p,1}}\big)\\
&\quad\times\big(\|\na u^{\ell}\|_{\dot B^{\f 32}_{2,1}}+\|\na u^h\|_{\dot B^{\f 3p}_{p,1}}+\|\tr\s^{\ell}\|_{\dot B^{\f 32}_{2,1}}+\|\tr\s^h\|_{\dot B^{\f 3p}_{p,1}}\big)ds,
\end{split}
\eeq
which implies\beq
\mathcal E_2(t)
\lesssim \mathcal E(0)+\big(\mathcal E_1(t)+\mathcal E_3(t)\big)\big(\mathcal E_2(t)+\mathcal E_3(t)+\mathcal E_4(t)\big).
\eeq

\subsection{The estimates of $\mathcal E_3(t)$}
Denote $\G=u-\Lam^{-1}\p$, we can get from \eqref{ob3} that
\beq\label{ob4}
\D_j\G_{t}+u\cdot \na \D_j\G+\f{1}{\f1{c_0}+t}\D_j\G-\tri \D_j\G=(\f{1}{\f1{c_0}+t}-\f12) \D_j u+[u\cdot\na,\Lam^{-1}]\D_j\p+f_j-\Lam^{-1}g_j.
\eeq
By the standard $L^p$ estimate, we get\beno
\f{d}{dt}\|\D_j\G\|_{L^p}+2^{2j}\|\D_j\G\|_{L^p}
\lesssim\|\D_j u\|_{L^p}+\|[u\cdot\na,\Lam^{-1}]\D_j\p\|_{L^p}+\|f_j \|_{L^p}+\|\Lam^{-1}g_j\|_{L^p}.
\eeno
Integrating in time and multiplying both sides of the above inequality by $2^{(\f 3p-1)j}$, summing up about $j> N$, then we obtain that
\beno
\begin{split}
&\|\G^h\|_{\widetilde L^\infty_t(\dot B^{\f 3p-1}_{p,1})}+\|\G^h\|_{ L^1_t(\dot B^{\f 3p+1}_{p,1})}
\lesssim\|\G^h_0\|_{\dot B^{\f 3p-1}_{p,1}}+2^{-2N}\big(\|\G^h\|_{ L^1_t(\dot B^{\f 3p+1}_{p,1})}+\|\p^h\|_{L^1_t(\dot B^{\f 3p}_{p,1})}\big)\\
&\quad+\int_0^t\sum_{j>N} 2^{(\f 3p-1)j}\big(\|[u\cdot\na,\Lam^{-1}]\D_j\p\|_{L^p}+\|f_j \|_{L^p}+\|\Lam^{-1}g_j\|_{L^p}\big)ds.
\end{split}
\eeno
Lemma \ref{com} and Lemma \ref{mult} ensure that\beno
\begin{split}
\sum_{j>N} 2^{(\f 3p-1)j}\|[u\cdot\na,\Lam^{-1}]\D_j\p\|_{L^p}
\lesssim& \sum_{j>N} 2^{\f 3pj}\|\Lam^{-1}u\|_{L^\infty}\|\D_j\p\|_{L^p}\\
\lesssim &\|u\|_{\dot B^{\f 3p-1}_{p,1}}\|\p^h\|_{\dot B^{\f 3p}_{p,1}}
\lesssim \big(\|u^{\ell}\|_{\dot B^{\f 12}_{2,1}}+\|u^h\|_{\dot B^{\f 3p-1}_{p,1}}\big)\|\p^h\|_{\dot B^{\f 3p}_{p,1}},
\end{split}
\eeno
\beno
\sum_{j>N} 2^{(\f 3p-1)j}\|f_j \|_{L^p}
\lesssim\big(\|u^{\ell}\|_{\dot B^{\f 12}_{2,1}}+\|u^h\|_{\dot B^{\f 3p-1}_{p,1}}\big)\big(\|\na u^{\ell}\|_{\dot B^{\f 32}_{2,1}}+\|\na u^h\|_{\dot B^{\f 3p}_{p,1}}\big),
\eeno
\beno
\begin{split}
\sum_{j>N} 2^{(\f 3p-1)j}\|\Lam^{-1}g_j\|_{L^p}
\lesssim&\big(\|\s^{\ell}\|_{\dot B^{\f 12}_{2,1}}+\|\s^h\|_{\dot B^{\f 3p-1}_{p,1}}\big)\big(\|\na u^{\ell}\|_{\dot B^{\f 32}_{2,1}}+\|\na u^h\|_{\dot B^{\f 3p}_{p,1}}+\|\tr\s^{\ell}\|_{\dot B^{\f 32}_{2,1}}+\|\tr\s^h\|_{\dot B^{\f 3p}_{p,1}}\big).
\end{split}
\eeno
Together with the above estimates, we deduce that
\beq\label{G}
\begin{split}
&\|\G^h\|_{\widetilde L^\infty_t(\dot B^{\f 3p-1}_{p,1})}+\|\G^h\|_{ L^1_t(\dot B^{\f 3p+1}_{p,1})}\\
\lesssim&\|\G^h_0\|_{\dot B^{\f 3p-1}_{p,1}}+2^{-2N}\big(\|\G^h\|_{ L^1_t(\dot B^{\f 3p+1}_{p,1})}+\|\p^h\|_{L^1_t(\dot B^{\f 3p}_{p,1})}\big)\\
&\quad+\int_0^t\big(\|u^{\ell}\|_{\dot B^{\f 12}_{2,1}}+\|u^h\|_{\dot B^{\f 3p-1}_{p,1}}+\|\s^{\ell}\|_{\dot B^{\f 12}_{2,1}}+\|\s^h\|_{\dot B^{\f 3p}_{p,1}}\big)\\
&\qquad\times\big(\|\p^h\|_{\dot B^{\f 3p}_{p,1}}+\|\na u^{\ell}\|_{\dot B^{\f 32}_{2,1}}+\|\na u^h\|_{\dot B^{\f 3p}_{p,1}}+\|\tr\s^{\ell}\|_{\dot B^{\f 32}_{2,1}}+\|\tr\s^h\|_{\dot B^{\f 3p}_{p,1}}\big)ds.
\end{split}
\eeq
We rewrite the second equation of \eqref{ob3} as follows\beq\label{ob5}
\D_j \p_t+u\cdot \na \D_j\p+ (\f12+\f{1}{\f1{c_0}+t}) \D_j \p=g_j-\f12\Lam\D_j\G.
\eeq
By virtue of the standard $L^p$ estimate, we get\beno
\f{d}{dt}\|\D_j\p\|_{L^p}+\|\D_j\p\|_{L^p}
\lesssim\|g_j\|_{L^p}+\|\Lam\D_j\G\|_{L^p},
\eeno
which leads to
\beq\label{p}
\begin{split}
&\|\p^h\|_{\widetilde L^\infty_t(\dot B^{\f 3p}_{p,1})}+\|\p^h\|_{L^1_t(\dot B^{\f 3p}_{p,1})}\\
\lesssim&\|\p^h_0\|_{\dot B^{\f 3p}_{p,1}}+\int_0^t\sum_{j>N} 2^{\f 3pj}\big(\|g_j\|_{L^p}+\|\Lam\D_j\G\|_{L^p}\big)ds\\
\lesssim&\|\p^h_0\|_{\dot B^{\f 3p}_{p,1}}+\|\G^h\|_{ L^1_t(\dot B^{\f 3p+1}_{p,1})}+\int_0^t\sum_{j>N} 2^{\f 3pj}\|g_j\|_{L^p}ds\\
\lesssim&\|\p^h_0\|_{\dot B^{\f 3p}_{p,1}}+\|\G^h\|_{ L^1_t(\dot B^{\f 3p+1}_{p,1})}+\int_0^t
\big(\|\s^{\ell}\|_{\dot B^{\f 32}_{2,1}}+\|\s^h\|_{\dot B^{\f 3p}_{p,1}}\big)\\
&\quad\times\big(\|\na u^{\ell}\|_{\dot B^{\f 32}_{2,1}}+\|\na u^h\|_{\dot B^{\f 3p}_{p,1}}+\|\tr\s^{\ell}\|_{\dot B^{\f 32}_{2,1}}+\|\tr\s^h\|_{\dot B^{\f 3p}_{p,1}}\big)ds.
\end{split}
\eeq
Taking $N\in\mathbb N^+$ is large enough, together with \eqref{G} and \eqref{p}, we verify that\beno
\begin{split}
&\|\G^h\|_{\widetilde L^\infty_t(\dot B^{\f 3p-1}_{p,1})}+\|\p^h\|_{\widetilde L^\infty_t(\dot B^{\f 3p}_{p,1})}+\|\G^h\|_{ L^1_t(\dot B^{\f 3p+1}_{p,1})}
+\|\p^h\|_{L^1_t(\dot B^{\f 3p}_{p,1})}\\
\lesssim &\mathcal E(0)
+\int_0^t\big (\|u^{\ell}\|_{\dot B^{\f 12}_{2,1}}+\|u^h\|_{\dot B^{\f 3p-1}_{p,1}}+\|\s^{\ell}\|_{\dot B^{\f 12}_{2,1}}+\|\s^h\|_{\dot B^{\f 3p}_{p,1}} \big)\\
&\quad\times\big(\|\p^h\|_{\dot B^{\f 3p}_{p,1}}+\|\na u^{\ell}\|_{\dot B^{\f 32}_{2,1}}+\|\na u^h\|_{\dot B^{\f 3p}_{p,1}}+\|\tr\s^{\ell}\|_{\dot B^{\f 32}_{2,1}}+\|\tr\s^h\|_{\dot B^{\f 3p}_{p,1}}\big)ds.
\end{split}
\eeno
We can deduce from $u=\G+\Lam^{-1}\p$ that\beq\label{e3}
\begin{split}
&\|u^h\|_{\widetilde L^\infty_t(\dot B^{\f 3p-1}_{p,1})}+\|\s^h\|_{\widetilde L^\infty_t(\dot B^{\f 3p}_{p,1})}+\|u^h\|_{L^1_t(\dot B^{\f 3p+1}_{p,1})}+\|\p^h\|_{L^1_t(\dot B^{\f 3p}_{p,1})}\\
\lesssim&\|\G^h\|_{\widetilde L^\infty_t(\dot B^{\f 3p-1}_{p,1})}+\|\p^h\|_{\widetilde L^\infty_t(\dot B^{\f 3p}_{p,1})}+\|\G^h\|_{ L^1_t(\dot B^{\f 3p+1}_{p,1})}
+\|\p^h\|_{L^1_t(\dot B^{\f 3p}_{p,1})},
\end{split}
\eeq
which implies\beq
\mathcal E_3(t)
\lesssim \mathcal E(0)+\big(\mathcal E_1(t)+\mathcal E_3(t)\big)\big(\mathcal E_2(t)+\mathcal E_3(t)+\mathcal E_4(t)\big).
\eeq

\subsection{The estimates of $\mathcal E_4(t)$}
Applying $\tr$ to the second equation of \eqref{PTT}, yields that\beno
(\tr \s)_t+u\cdot\na \tr \s+(\tr\s)^2+\f2{\f1{c_0}+t}\tr\s=0,
\eeno
which leads to\beno
\big[{(\f1{c_0}+t)^2}\tr\s\big]_t+u\cdot\na\big[(\f1{c_0}+t)^2\tr\s\big]
\leq-(\f1{c_0}+t)^2(\tr\s)^2.
\eeno
Applying the operator $\D_j$ to above equation, we get\beno
\big[(\f1{c_0}+t)^2\D_j\tr\s\big]_t+u\cdot\na \big[(\f1{c_0}+t)^2\D_j\tr\s\big]
\leq-(\f1{c_0}+t)^2\big(\D_j(\tr\s)^2-[u\cdot\na,\D_j]\tr\s\big).
\eeno
By virtue of the standard $L^q$ estimate, we obtain\beno
\f{d}{dt}\big[(\f1{c_0}+t)^2\|\D_j\tr\s\|_{L^q}\big]
\lesssim(\f1{c_0}+t)^2\big(\|\D_j(\tr\s)^2\|_{L^q} +\|[u\cdot\na,\D_j]\tr\s\|_{L^q}\big).
\eeno
 Choose $q=2$ and $q=p$ with respectively. Multiplying both sides of the above inequality by $2^{\f 32j}$ and $2^{\f 3pj}$, and then summing up about $j\leq N$ and $j> N$ with respectively, then we obtain that
\beno
\begin{split}
&\f{d}{dt}\Big[(\f1{c_0}+t)^2\big(\|\tr\s^{\ell}\|_{\dot B^{\f 32}_{2,1}}+\|\tr\s^h\|_{\dot B^{\f 3p}_{p,1}})\Big]\\
\lesssim
&\Big[(\f1{c_0}+t)^2\big(\|\tr\s^{\ell}\|_{\dot B^{\f 32}_{2,1}}+\|\tr\s^h\|_{\dot B^{\f 3p}_{p,1}}\big)\Big]\big(\|\tr\s^{\ell}\|_{\dot B^{\f 32}_{2,1}}+\|\tr\s^h\|_{\dot B^{\f 3p}_{p,1}}+\|\na u^{\ell}\|_{\dot B^{\f 32}_{2,1}}+\|\na u^h\|_{\dot B^{\f 3p}_{p,1}}\big).
\end{split}
\eeno
By Gronwall's inequality, then we have
\beno
\begin{split}
&(\f1{c_0}+t)^2\big(\|\tr\s^{\ell}\|_{\dot B^{\f 32}_{2,1}}+\|\tr\s^h\|_{\dot B^{\f 3p}_{p,1}}\big)\\
\lesssim&
\f1{c_0}\big(\|\tr\s^{\ell}_0\|_{\dot B^{\f 32}_{2,1}}+\|\tr\s^h_0\|_{\dot B^{\f 3p}_{p,1}}\big)
\exp\Big\{\int_0^t\big(\|\tr\s^{\ell}\|_{\dot B^{\f 32}_{2,1}}+\|\tr\s^h\|_{\dot B^{\f 3p}_{p,1}}+\|\na u^{\ell}\|_{\dot B^{\f 32}_{2,1}}+\|\na u^h\|_{\dot B^{\f 3p}_{p,1}}\big)\Big\},
\end{split}
\eeno
which leads to
\beq
\begin{split}
&\int_0^t\big(\|\tr\s^{\ell}\|_{\dot B^{\f 32}_{2,1}}+\|\tr\s^h\|_{\dot B^{\f 3p}_{p,1}}\big)ds\\
\lesssim&\f1{c_0}\big(\|\s^{\ell}_0\|_{\dot B^{\f 12}_{2,1}}+\|\s^h_0\|_{\dot B^{\f 3p}_{p,1}}\big)\\
&\quad\times\exp\Big\{\int_0^t\big(\|\tr\s^{\ell}\|_{\dot B^{\f 32}_{2,1}}+\|\tr\s^h\|_{\dot B^{\f 3p}_{p,1}}+\|\na u^{\ell}\|_{\dot B^{\f 32}_{2,1}}+\|\na u^h\|_{\dot B^{\f 3p}_{p,1}}\big)ds\Big\},
\end{split}
\eeq
that is
\beq
\mathcal E_4(t)
\lesssim {\mathcal E(0)} \exp\big(\mathcal E_2(t)+\mathcal E_3(t)+\mathcal E_4(t)\big).
\eeq

\subsection{Proof of the Theorem \ref{them}}
\begin{proof}
In this subsection, we will combine the above a priori estimates of $\mathcal E_1(t)$, $\mathcal E_2(t)$, $\mathcal E_3(t)$ and $\mathcal E_4(t)$ together and give the proof of the Theorem \ref{them}, then exists for any $t\in[0,T]$, we have\beq\label{lam}
\begin{split}
E(t)=&\mathcal E_1(t)+\mathcal E_2(t)+\mathcal E_3(t)+\mathcal E_4(t)\\
\leq&C^*\Big[1+\exp\big(\mathcal E_2(t)+\mathcal E_3(t)+\mathcal E_4(t))\big)\Big]\mathcal E(0)+\Big[\mathcal E_1^2(t)+\mathcal E_2^2(t)+\mathcal E_3^2(t)+\mathcal E_4^2(t)\Big].
\end{split}
\eeq
Due to the local existence theory, there exists a positive time $T$ such that\beq\label{lambda}
E(t) \leq 3C^*\d_0,\quad \forall t\in[0,T].
\eeq
Let $T^*$ be the largest possible time of $T$ for what \eqref{lambda} holds. Under the setting of initial data, there exists a small enough number $\ep_0$ such that $\mathcal E(0)\leq \d_0\leq \ep_0$. By virtue of \eqref{lam} and the smallness assumption on $\d_0$ , we get that\beno
E(t)\leq 2C^*\d_0+C\d_0^2<3C^*\d_0.
\eeno
 By standard continuity argument and total energy \eqref{lam}, we can show that $T^*=\infty$ provided that $\d_0$ is small enough. Hence, we finish the proof of the Theorem \ref{them}
\end{proof}

\begin{rema}
As $\f {1}{\f 1c_0+t}$ does not belong to any Besov spaces,  we can't  say that $\tau$ belongs to any Besov spaces. However, we have
\[\|\s\|_{L^\infty_{t,x}}\lesssim \|\s^\ell \|_{L^\infty_{t,x}}+\|\s^h\|_{L^\infty_{t,x}}\lesssim  \|\s^\ell \|_{L^\infty_{t}({\dot B^{\f 32}_{2,1}})}+\|\s^h\|_{L^\infty_{t}(\dot B^{\f 3p}_{p,1})}\lesssim  \|\s^\ell \|_{L^\infty_{t}({\dot B^{\f 12}_{2,1}})}+\|\s^h\|_{L^\infty_{t}(\dot B^{\f 3p}_{p,1})},\]
which implies that $\tau\in L^\infty_{t,x}$. Moreover, one can check that $\tau\in C(\mathbb{R}^+\times \mathbb{R}^3)$
\end{rema}

\begin{rema}
The initial condition $c_0>0$ and $\ep_0$ is small enough implies that $\inf \tr \tau_0>0$. On the other hand, if there exists a $x_0\in\mathbb{R}^3$ such that $\tr \tau_0(x_0)<0$, we can deduce from the second equation of \eqref{OB} that
\beq
\tr \tau_t+u\cdot \nabla \tr \tau+(\tr \tau)^2=0.
\eeq
Consider the trajectory equation
\beno
\f{d}{dt} q(t,x)=u(t, q(t,x)),\quad q(0,x)=x.
\eeno
It is easy to see that
\beq
\tr \tau(t,q(t,x_0))=\f{\tr \tau_0(x_0)}{1+\tr\tau_0(x_0)t}, \quad \forall t\in[0,T],
\eeq
which leads to $\tr \tau(t, q(t,x_0))$ blows up in finite time. Thus, the condition $\inf \tr \tau_0\geq 0$ is a necessary condition to ensure that the strong solution exists globally.
\end{rema}

{\bf Acknowledgements}.
Wei Luo is partially supported by NSF of China under Grant 11701586 and 11671407. Xiaoping Zhai is partially supported by NSF of China under Grant 11601533.

\bibliographystyle{abbrv} 
\bibliography{PTTref}

\end{document}